\newtheorem{thm}{Theorem}
\newtheorem{lem}[thm]{Lemma}
\newtheorem{cor}[thm]{Corollary}
\newtheorem{con}[thm]{Conjecture}
\theoremstyle{definition}
\newcommand{\Aut}{\operatorname{Aut}}
\title{Breaking graph symmetries by edge colourings}
\author{Florian Lehner}
\begin{document}
\maketitle

\begin{abstract}
The distinguishing index $D'(G)$ of a graph $G$ is the least number of colours needed in an edge colouring which is not preserved by any non-trivial automorphism. Broere and Pil\'sniak conjectured that if every non-trivial automorphism of a countable graph $G$ moves infinitely many edges, then $D'(G) \leq 2$. We prove this conjecture.
\end{abstract}

\section{Introduction}
A colouring of the vertices or edges of a graph $G$ is called distinguishing if the only automorphism which preserves it is the identity. Originally inspired by a recreational mathematics problem, Albertson and Collins~\cite{MR1394549} first introduced the notion formally in 1996. Despite (or maybe because of) its recreational origin, the concept quickly received a lot of attention leading to numerous papers on distinguishing colourings of graphs and other combinatorial structures.

One interesting line of research is the connection between motion (i.e.\ the minimal number of elements moved by a non-trivial automorphism) and the least number of colours needed in a distinguishing colouring. Intuitively, the more elements are moved by every non-trivial automorphism, the easier it should be to find a colouring with few colours which isn't preserved by any of them. Russel and Sundaram~\cite{MR1617449} were the first to make this intuition precise. They showed that if the motion of a finite graph is at least $2 \cdot \log_2 |\Aut G|$, then there is a distinguishing $2$-colouring. The same is true for infinite graphs whose automorphism group is finite. Tucker~\cite{zbMATH05902980} conjectured, that an analogous result holds for locally finite graphs with infinite automorphism group.
\begin{con}[Infinite motion conjecture~\cite{zbMATH05902980}]
\label{con:imc}
Let $G$ be a locally finite, connected graph and assume that every automorphism of $G$ moves infinitely many vertices. Then there is a distinguishing $2$-vertex colouring.
\end{con}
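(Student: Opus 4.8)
The natural line of attack is probabilistic. Colour each vertex independently with colour $1$ with probability $p$ and colour $0$ with probability $1-p$, write $\mu$ for the resulting product measure on the space $\{0,1\}^V$ of $2$-colourings, and for a non-trivial automorphism $\varphi$ let $C_\varphi = \{c : c \circ \varphi = c\}$ be the (closed, hence measurable) set of colourings that $\varphi$ preserves. A colouring is distinguishing exactly when it lies outside $\bigcup_{\varphi \neq \mathrm{id}} C_\varphi$, so it suffices to show this union is not all of $\{0,1\}^V$; ideally one proves the stronger statement $\mu\bigl(\bigcup_{\varphi \neq \mathrm{id}} C_\varphi\bigr) < 1$, which would even give that almost every colouring is distinguishing.

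First I would verify that each individual $C_\varphi$ is $\mu$-null, and here the infinite motion hypothesis enters. If $\varphi$ moves infinitely many vertices, then either some orbit of $\varphi$ is infinite---forcing infinitely many vertices to receive the same colour, an event of probability $0$---or $\varphi$ has infinitely many orbits of size at least $2$. In the latter case, picking one equality $c(w) = c(\varphi w)$ from each of infinitely many pairwise disjoint orbits produces independent events, each of probability $p^2 + (1-p)^2 < 1$ for $0 < p < 1$, so $\mu(C_\varphi) = 0$. Thus for every fixed non-trivial $\varphi$ almost every colouring breaks it.

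The crux---and the reason this is a genuine open conjecture rather than an immediate corollary of the finite motion lemma of Russel and Sundaram~\cite{MR1617449}---is that for a locally finite graph $\Aut G$ may have the cardinality of the continuum, so a union bound over the sets $C_\varphi$ is hopeless: an uncountable union of $\mu$-null sets can fill all of $\{0,1\}^V$. To tame the union I would use that, since $G$ is locally finite and connected, $\Aut G$ is a second-countable, totally disconnected, locally compact group in the permutation topology, in which the pointwise stabiliser of any finite set of vertices is compact and open. Fixing a root $v_0$ and enumerating $V$ by increasing distance from $v_0$, an automorphism is determined by its action on the successive balls $B_n(v_0)$, and by local finiteness there are only finitely many possibilities for this action at each stage. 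The plan is then to partition the non-trivial automorphisms into countably many compact ``cylinders'' according to their action on some $B_n$, to bound the measure of the colourings fixed by an automorphism in a given cylinder by a quantitative, finite motion-type estimate on the corresponding ball, and finally to sum these countably many bounds.

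I expect the main obstacle to be exactly this final summation. The finite motion lemma yields a useful per-cylinder bound only when the motion visible inside $B_n$ grows quickly relative to the number of admissible local actions, whereas in a general locally finite graph the automorphisms with very large support may accumulate faster than the probabilistic savings guaranteed by infinite motion accrue; the compactness argument converts the uncountable union into a countable one, but typically at the price of losing quantitative control in the limit $n \to \infty$. Arranging the per-cylinder estimates to be simultaneously summable---perhaps by tuning the bias $p$ and the enumeration adaptively, or by settling for a Borel--Cantelli argument that proves mere nonemptiness rather than full measure---is the heart of the matter, and is precisely the point at which I would expect the argument either to succeed under an additional growth or structural hypothesis or to stall in full generality.
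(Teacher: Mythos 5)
The statement you were asked to prove is labelled a \emph{conjecture} in the paper, and the paper explicitly describes it as still wide open; the paper itself proves only the edge-colouring analogue (Conjecture~\ref{con:iemc}) and, as a corollary, the special case of Conjecture~\ref{con:imc} for line graphs. So there is no proof in the paper to compare against, and your proposal -- by your own admission in the final paragraph -- does not constitute a proof either. The concrete gap is exactly where you locate it: the per-automorphism estimate $\mu(C_\varphi) = 0$ is correct (your case split into ``some orbit infinite'' versus ``infinitely many orbits of size at least two'' is sound, since an automorphism with all orbits finite and only finitely many of size $\geq 2$ moves only finitely many vertices), but $\Aut G$ can have cardinality $2^{\aleph_0}$, and an uncountable union of null sets need not be null or even proper. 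Decomposing $\Aut G \setminus \{\mathrm{id}\}$ into countably many compact cylinders by the action on balls $B_n(v_0)$ converts the union into a countable one, but you then need a per-cylinder bound on the measure of colourings preserved by \emph{some} automorphism in the cylinder that is summable over $n$, and infinite motion alone does not supply the quantitative control to make this work; this is precisely why the known partial results require extra hypotheses such as bounds on the growth of the graph or on the size of the automorphism group.

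Two further remarks. First, your approach is essentially the standard line of attack in the literature the paper cites (Russell--Sundaram for finite motion, and the growth-condition papers for the infinite case), so as a survey of the state of the art it is accurate; but a correct review of your own submission should conclude that the statement remains unproved. Second, note that the paper's actual contribution is to prove the \emph{edge} version by a completely different, deterministic and constructive argument (degree-coding at vertices of infinite degree, then planting black paths of distinct prescribed lengths along rays of moved vertices, using Lemma~\ref{lem:infcomp}); nothing in that construction transfers to the vertex-colouring setting for general locally finite graphs, and indeed the paper points out that the natural common generalisation is false for countable non-locally-finite graphs.
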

Note that if the motion of such a graph is infinite, then then it must be $\aleph_0$ and that $2^{\aleph_0}$ is a trivial upper bound for the size of the automorphism group.

While Tucker's conjecture is still wide open, there  are many partial results towards it, see~\cite{zbMATH06351222,MR2302543,zbMATH06405771,zbMATH06261159,growth,zbMATH06045720,MR2302536}. Broere and Pil\'sniak~\cite{zbMATH06428682} noticed that most of these partial results can be generalised to edge colourings. Consequently, they conjecured that an analogous statement to Conjecture~\ref{con:imc} should hold in the realm of edge colourings. In fact their conjecture for edge colourings is even stronger as it doesn't require the graph to be locally finite.
\begin{con}[Infinite edge motion conjecture~\cite{zbMATH06428682}]
\label{con:iemc}
Let $G$ be a countable, connected graph and assume that every automorphism of $G$ moves infinitely many edges. Then there is a distinguishing $2$-edge colouring.
\end{con}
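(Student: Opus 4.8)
The plan is to use a random $2$-edge-colouring and reduce the conjecture to a measure-theoretic statement, isolating an uncountable-group phenomenon as the crux. Equip $\{0,1\}^{E(G)}$ with the uniform product measure $\mu$, so that every edge is coloured by an independent fair coin. For a fixed automorphism $\varphi\neq\mathrm{id}$ I first want the event $S_\varphi=\{c:c\circ\varphi=c\}$ to be $\mu$-null. If $\varphi$ preserves $c$, then $c$ is constant on every orbit of $\varphi$ acting on $E(G)$. Since $\varphi$ moves infinitely many edges, either it has infinitely many non-trivial (finite) orbits $O_1,O_2,\dots$ or it has an infinite orbit. In the first case $\mu(S_\varphi)\le\prod_i 2^{1-|O_i|}=0$, because infinitely many factors are at most $\tfrac12$; in the second case being constant on a single infinite orbit already has probability $0$. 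Hence $\mu(S_\varphi)=0$ for every non-trivial $\varphi$.

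If $\Aut G$ is countable this already finishes the proof, since $B=\{c:\exists\varphi\neq\mathrm{id},\,c\circ\varphi=c\}=\bigcup_{\varphi\neq\mathrm{id}}S_\varphi$ is then a countable union of null sets, and any $c\notin B$ is distinguishing. To approach the general case I would first make the index set countable. Fix an enumeration $v_0,v_1,\dots$ of $V(G)$ and let $H_n$ be the pointwise stabiliser of $\{v_0,\dots,v_n\}$. Every non-trivial $\varphi$ has a well-defined \emph{first moved vertex} $v_{n+1}$, so $\varphi\in H_n\setminus H_{n+1}$; consequently $B=\bigcup_n B_n$ with $B_n=\{c:\exists\varphi\in H_n,\ \varphi(v_{n+1})\neq v_{n+1},\ c\circ\varphi=c\}$, which is a genuinely countable union. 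It therefore suffices to prove $\mu(B_n)=0$ for each $n$, i.e.\ to defeat automorphisms that fix a prescribed finite vertex set but still move infinitely many edges. Recall also that $\Aut G\le\mathrm{Sym}(V(G))$ is closed in the permutation topology, hence Polish, and a Polish group is countable iff it is discrete iff some $H_n$ is trivial; so the countable case is exactly the one handled above, and the remaining difficulty is concentrated in uncountable $H_n$.

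The main obstacle is precisely this last step when $H_n$ is uncountable. Then the union over $\varphi$ defining $B_n$ is uncountable, countable subadditivity is unavailable, and an uncountable union of null sets can fail to be null. When $H_n$ happens to be compact (equivalently profinite) one can attempt a Fubini argument against its Haar measure: since $\mu(S_\varphi)=0$ for every $\varphi$, for $\mu$-almost every $c$ the survivors inside $H_n$ form a Haar-null set. Two difficulties arise. In general $H_n$ need not even be locally compact, so no Haar measure is available; and, more seriously, \emph{Haar-null is not empty}, whereas the theorem demands an honestly trivial stabiliser. Bridging this null-versus-empty gap is where the real work lies, and I expect it to require the group structure essentially rather than measure alone. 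One suggestive observation is that two survivors sending $v_{n+1}$ to the same vertex differ by a survivor in $H_{n+1}$, so that rigidity one level deeper forces the survivors one level up to be countable; this points to an induction along the filtration $H_0\ge H_1\ge\cdots$, though controlling measure through the induction is delicate.

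Failing a clean measure argument, the fallback I would pursue is to build the colouring explicitly and recursively, exploiting infinite motion to guarantee room at every stage: after finitely many edges have been coloured, any automorphism fixing that finite region still moves infinitely many \emph{uncoloured} edges, which can be used to impose a new distinguishing constraint without conflicting with the finitely many earlier commitments. The key is to organise the construction so as to defeat each entire class $B_n$ at once — for instance, by colouring edges so that, in the limit, no colour-preserving automorphism fixing $\{v_0,\dots,v_n\}$ can move $v_{n+1}$ — rather than one automorphism at a time, interleaving the work for the countably many classes and extracting the final colouring by a compactness or König's lemma argument. In either route the hard part is the same: turning the infinite motion hypothesis into enough local flexibility to kill an uncountable family of symmetries that all agree on a large finite region.
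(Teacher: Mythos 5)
Your opening computation is correct: for each fixed non-trivial $\varphi$ the set $S_\varphi$ of colourings it preserves is $\mu$-null, and your diagnosis of the obstacle is also correct. But the proposal stops exactly at the point where the theorem actually lives. The reduction to the sets $B_n$ does not help, because each $H_n$ can still have cardinality $2^{\aleph_0}$, so $B_n$ is still an uncountable union of null sets; the Haar/Fubini idea is, as you note yourself, blocked both by the absence of local compactness and by the gap between ``Haar-null'' and ``empty''; and the fallback recursive construction is described only at the level of intent, with no mechanism for defeating a whole uncountable class $B_n$ in countably many steps. There is moreover a concrete reason to believe that no completion of the measure argument \emph{as set up} can work: everything you prove before the impasse applies verbatim to vertex colourings of countable graphs with infinite vertex motion, and that statement is false --- the paper cites a counterexample \cite{zbMATH06502805}. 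So any correct proof must use structure specific to edges (in this paper, the presence of vertices of infinite degree and what infinite edge motion forces about them), which your argument never touches.

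For comparison, the paper's proof is a bare-hands construction built on three devices that your sketch would need but does not supply. First, the vertices of infinite degree are enumerated and the $n$-th one is given exactly $d_n$ incident black edges for a strictly increasing sequence $(d_n)$; since automorphisms preserve infinite degree and black degree, this fixes all of them at once. Second, Lemma~\ref{lem:infcomp} shows that in a graph with infinite edge motion, every finite component of the subgraph induced by the moved vertices of an automorphism contains a vertex of infinite degree; hence once the infinite-degree vertices are pinned down, any surviving automorphism moves a vertex lying in an \emph{infinite} component of moved finite-degree vertices, and therefore (by local finiteness and K\"onig's lemma) on a ray consisting entirely of moved, hence still-white, vertices. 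Third, inside each infinite component of the locally finite part one colours vertex-disjoint black paths of pairwise distinct lengths $l_k, l_{k+1}, \ldots$ along such rays, working outward through the finite distance classes $S_i$ from a root edge; finiteness of each $S_i$ bounds the number of steps at each level, and the distinct lengths simultaneously kill the setwise stabiliser of each component and prevent components from being permuted. This is precisely the ``local flexibility'' your last paragraph asks for: infinite edge motion converts each surviving automorphism into an entire free ray to recolour, which is how an uncountable family of symmetries is defeated by countably many moves. Without these ingredients the proposal is a correct problem analysis, not a proof.
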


The two conjectures are closely related. In~\cite{zbMATH03228459}, a generalisation of Whitney's theorem is proved, stating that for connected graphs on more than $4$ vertices (and in particular for infinite graphs) there is a natural group isomorphism between $\Aut G$ and $\Aut L(G)$, where $L(G)$ denotes the line graph of $G$. Hence, a distinguishing vertex colouring of $L(G)$ translates into a distinguishing edge colouring of $G$ and vice versa.  In particular, Conjecture~\ref{con:imc} implies the special case of Conjecture~\ref{con:iemc} where the graph is assumed to be locally finite. 

Furthermore, if the generalisation of Conjecture~\ref{con:imc} to countable graphs were true, then this would immediately imply Conjecture~\ref{con:iemc}. However, in ~\cite{zbMATH06502805} a counterexample for this generalisation is constructed, making it somewhat counterintuitive that Conjecture~\ref{con:iemc} holds in full generality.

Nevertheless, in the present paper we prove Conjecture~\ref{con:iemc}. We also attempt to give some intuition why this is not as surprising as it may seem at first glance. For this purpose, in Section \ref{sec:edgevertex} we compare distinguishing edge and vertex colourings. We show that if there is a distinguishing vertex colouring with $k$ colours, then there is a distinguishing edge colouring using at most $k+1$ colours. This is true for arbitrary graphs. One possible interpretation of this result is that finding a distinguishing edge colouring with few colours should generally be easier (or at least not harder) then finding such a vertex colouring and consequently that Conjecture~\ref{con:iemc} should be weaker than its vertex colouring counterpart.

\section{Notions and notations}
\label{sec:notions}

We will follow the terminology of~\cite{MR2159259} for all graph theoretical notions which are not explicitly defined. Let $G = (V,E)$ be a graph and let $\Aut G$ denote its automorphism group. A \emph{vertex colouring} of $G$ with colours in $C$ is a map $c \colon V \to C$. Analogously define an \emph{edge colouring}. We say that $\gamma \in \Aut G$ \emph{preserves} the (vertex or edge) colouring $c$ if $c \circ \gamma = c$. Two colourings $c$ and $d$ are called \emph{isomorphic}, if there is $\gamma \in \Aut G$ such that $c \circ \gamma = d$.

Call a colouring of $G$ \emph{distinguishing}, if the identity is the only automorphism which preserves it. The \emph{distinguishing number} of $G$, denoted by $D(G)$ is the least number of colours in a distinguishing vertex colouring. The \emph{distinguishing index} of $G$, denoted by $D'(G)$ is the analogous concept for edge colourings.

The \emph{motion} of a graph $G$ is the least number of vertices moved by a non-trivial automorphism of $G$. The \emph{edge motion} is the least number of edges moved by a non-trivial automorphism.

\section{Infinite motion and 2-distinguishability}
\label{sec:iemc}

In this section we prove Conjecture~\ref{con:iemc}. The following lemma will be useful.

\begin{lem}
\label{lem:infcomp}
Let $G$ be a graph with infinite edge motion and let $\gamma \in \Aut G$. Denote by $V_{\text{move}}$ the set of vertices of $G$ which are not fixed by $\gamma$. Let $C$ be the vertex set of a component of the subgraph of $G$ induced by $V_{\text{move}}$. If $C$ is finite, then it must contain a vertex of infinite degree.
\end{lem}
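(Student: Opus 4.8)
The plan is to argue by contradiction: assuming $C$ is finite and that \emph{every} vertex of $C$ has finite degree, I will manufacture a non-trivial automorphism of $G$ that moves only finitely many edges, contradicting the hypothesis that $G$ has infinite edge motion. If $\gamma$ is trivial the statement is vacuous, so I assume $\gamma \neq \mathrm{id}$. The important conceptual point is that the manufactured automorphism need \emph{not} be $\gamma$ itself; the role of infinite edge motion is merely to forbid the existence of \emph{any} non-trivial automorphism with finite edge support.

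First I would record two elementary observations. Since $\gamma$ maps $V_{\text{move}}$ onto itself, it induces an automorphism of the induced subgraph $G[V_{\text{move}}]$ and hence permutes its connected components; in particular $\gamma^{i}(C)$ is, for every $i \in \mathbb{Z}$, again a finite component all of whose vertices have finite degree, because degrees are preserved by the automorphism $\gamma^{i}$. The load-bearing \emph{locality} observation is the following: if $x \in V_{\text{move}}$ is adjacent to some vertex $y$ lying outside the component of $x$, then $y \notin V_{\text{move}}$, i.e.\ $y$ is fixed by $\gamma$ --- for otherwise the edge $\{x,y\}$ would witness that $x$ and $y$ lie in a common component of $G[V_{\text{move}}]$. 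This is precisely what lets me alter $\gamma$ on a union of components while staying inside $\Aut G$.

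Next I would split according to whether $\gamma$ fixes $C$ setwise. If $\gamma(C) = C$, define $\gamma^{*}$ to agree with $\gamma$ on $C$ and to be the identity elsewhere. Using the locality observation one checks that $\gamma^{*}$ preserves every edge: edges inside $C$ and edges disjoint from $C$ are clearly handled, while an edge $\{x,y\}$ with $x \in C$ and $y \notin C$ has $y$ fixed by $\gamma$, so $\{\gamma^{*}(x),\gamma^{*}(y)\} = \{\gamma(x),\gamma(y)\}$ is an edge. Thus $\gamma^{*} \in \Aut G$; it is non-trivial because it moves every vertex of $C$, and since $C$ is finite with only finite-degree vertices it moves only finitely many edges --- the desired contradiction. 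If instead $\gamma(C) \neq C$, then $C$ and $\gamma(C)$ are disjoint components, and I define $\delta$ to equal $\gamma$ on $C$, $\gamma^{-1}$ on $\gamma(C)$, and the identity everywhere else. The same locality argument, applied to both $C$ and $\gamma(C)$, shows $\delta \in \Aut G$; it is non-trivial and, since $C \cup \gamma(C)$ is finite with finite degrees, it again moves only finitely many edges, a contradiction.

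The edge-by-edge verification that $\gamma^{*}$ and $\delta$ are automorphisms is routine once the locality observation is in place. The step I expect to be the genuine obstacle is conceptual rather than computational: one must resist the temptation to use $\gamma$ directly, or to restrict $\gamma$ to the \emph{entire} orbit $\bigcup_{i}\gamma^{i}(C)$, since that orbit may be infinite and the resulting map would then have infinite support. The right move is to localise the surgery to $C$ alone, respectively to the single pair $C \cup \gamma(C)$, which keeps the support finite while guaranteeing non-triviality.
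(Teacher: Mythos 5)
Your proof is correct and follows essentially the same route as the paper: the same case split on whether $\gamma(C)=C$, the same surgically defined automorphisms ($\gamma$ on $C$ and the identity elsewhere, respectively $\gamma$ on $C$, $\gamma^{-1}$ on $\gamma(C)$ and the identity elsewhere), and the same key observation that the neighbourhood of $C$ outside $C$ is fixed by $\gamma$. The only difference is that you spell out the routine verification that the modified maps are automorphisms, which the paper leaves implicit.
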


\begin{proof}
Assume for a contradiction that $C$ is finite and contains no vertex of infinite degree. Then $\gamma$ moves $C$ to some component $C'$ of $G[V_{\text{move}}]$. Denote by $\partial C$ the set of vertices outside of $C$ with a neighbour in $C$. Then each vertex of $\partial C$ is fixed by $\gamma$. If $C=C'$ we hence get the following automorphism:
\[
	\gamma'(v) = 
	\begin{cases}
	\gamma(v) & \text{if }v \in C,\\
	v & \text{if }v \notin C.
	\end{cases}
\]
Now $\gamma'$ only moves finitely many vertices all of which have finite degree. Hence it is an automorphism of $G$ with finite edge motion contradicting the fact that $G$ has infinite edge motion.

If $C \neq C'$ then define the following automorphism:
\[
	\gamma'(v) = 
	\begin{cases}
	\gamma(v) & \text{if }v \in C,\\
	\gamma^{-1}(v) & \text{if }v \in C',\\
	v & \text{otherwise}.
	\end{cases}
\]
Again this is an automorphism with finite edge motion contradicting the fact that $G$ has infinite edge motion.
\end{proof}

\begin{thm}
Every countable graph with infinite edge motion has $2^{\aleph_0}$ non-isomorphic distinguishing $2$-edge colourings.
\end{thm}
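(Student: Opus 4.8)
The plan is to work inside the Cantor space $\{0,1\}^E$ of all $2$-edge-colourings, equipped with the fair-coin product probability measure $\mu$ in which each edge is coloured independently and uniformly; since $G$ is countable, $E$ is countable and this is a standard Borel probability space (note that connectedness of $G$ is never needed). For $c$ and $\gamma \in \Aut G$ we have $c\circ\gamma = c$ precisely when $c$ is constant on every orbit of $\langle\gamma\rangle$ acting on $E$. If $\gamma \neq \operatorname{id}$ then, by infinite edge motion, $\gamma$ moves infinitely many edges, so either some orbit is infinite (and $c$ constant on it is an event of probability $0$) or there are infinitely many disjoint orbits of size $\geq 2$ (and $c$ constant on all of them is a product of independent events each of probability at most $\tfrac12$, hence $0$). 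Either way $\mu(\operatorname{Fix}(\gamma)) = 0$, where $\operatorname{Fix}(\gamma) = \{c : c\circ\gamma = c\}$. Thus every individual non-trivial automorphism preserves only a null set, and it suffices to show that the set $B$ of colourings preserved by some non-trivial automorphism is null: its complement then has full measure, in particular cardinality $2^{\aleph_0}$.

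The main obstacle is that $\Aut G$ may be uncountable (as large as $2^{\aleph_0}$), so $B = \bigcup_{\gamma \neq \operatorname{id}}\operatorname{Fix}(\gamma)$ is an uncountable union of null sets and the naive union bound is unavailable; this is exactly the phenomenon that makes the conjecture look counterintuitive. I would first convert this into a countable union indexed by vertex pairs. Since a non-trivial automorphism moves some vertex, setting
\[
	B_{u,w} = \{\, c : c\circ\gamma = c \text{ for some } \gamma \in \Aut G \text{ with } \gamma u = w \,\}
\]
we obtain $B = \bigcup_{u \neq w} B_{u,w}$, a countable union because $V$ is countable. It therefore suffices to prove that each $B_{u,w}$ is null.

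This last step is the heart of the matter and the part I expect to be hardest. The idea is that the constraint $\gamma u = w$ with $u \neq w$, together with infinite edge motion, forces infinitely many coordinated colour equalities. Concretely, for any $\gamma$ with $\gamma u = w$ I would extract an infinite family of pairwise edge-disjoint pairs $\{e_i,\gamma e_i\}$ with $e_i \neq \gamma e_i$; preservation of $c$ then forces $c(e_i) = c(\gamma e_i)$ for all $i$, an intersection of infinitely many independent $\tfrac12$-events. Here Lemma~\ref{lem:infcomp} is essential: it controls the components of the set of vertices moved by $\gamma$ and guarantees that the moved edges cannot all be confined to a finite low-degree gadget — precisely the configuration that would let automorphisms conspire to cover a positive-measure set, and precisely what infinite edge motion forbids. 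The remaining difficulty is that the extracted edge-pairs depend on the particular $\gamma$, of which there are uncountably many. I would resolve this by noting that the relevant combinatorial data — the possible images of a fixed finite configuration around $u$ — range over a countable set because $G$ is countable, so that $B_{u,w}$ is exhibited as a \emph{countable} union of null ``alignment'' sets of the form $\{c : c(e_i) = c(f_i)\text{ for all }i\}$, and is therefore null. This yields $\mu(B) = 0$, and hence $2^{\aleph_0}$ distinguishing colourings.

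Finally, to upgrade ``full measure'' to ``$2^{\aleph_0}$ pairwise non-isomorphic'' I would use that a distinguishing colouring has trivial stabiliser, so its isomorphism class is a single $\Aut G$-orbit on which the group acts freely. If $\lvert\Aut G\rvert < 2^{\aleph_0}$ this is immediate from cardinal arithmetic, since $2^{\aleph_0}$ colourings cannot be covered by fewer than $2^{\aleph_0}$ classes each of size $<2^{\aleph_0}$. For uncountable $\Aut G$ one checks that each such orbit is itself $\mu$-null, so that the full-measure set of distinguishing colourings is not covered by countably many classes; a standard dichotomy for the (analytic) isomorphism relation then forces the number of classes to be exactly $2^{\aleph_0}$. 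This part is routine bookkeeping compared with the nullness of $B_{u,w}$, which is where the genuine content lies.
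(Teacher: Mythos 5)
Your overall strategy -- show that the set $B$ of $2$-edge-colourings preserved by some non-trivial automorphism is null for the fair-coin product measure -- cannot work, because the headline claim $\mu(B)=0$ is false for some graphs covered by the theorem. Take $G=K_{\aleph_0}$: every non-identity permutation of the vertices moves all but at most one edge at any vertex it displaces, so $G$ has infinite edge motion. But for a random $2$-edge-colouring the black graph is a copy of $G(\mathbb N,\tfrac12)$, hence almost surely isomorphic to the Rado graph, and a colour-preserving automorphism of the coloured $K_{\aleph_0}$ is exactly an automorphism of the black graph (the white graph is its complement and is preserved automatically). So almost surely the colouring admits $2^{\aleph_0}$ non-trivial colour-preserving automorphisms, i.e.\ $\mu(B)=1$ and the distinguishing colourings form a \emph{null} set. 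The specific step where your argument breaks is the claim that each $B_{u,w}$ is a countable union of null ``alignment'' sets. A fixed finite configuration around $u$ forces only finitely many colour equalities, which is an event of positive measure; to get a null set you need an infinite pattern, and the infinite patterns arising from the uncountably many $\gamma$ with $\gamma u = w$ need not form a countable family. Quantitatively, restricting to the first $n$ moved edge-pairs gives a union of up to exponentially many events each of measure $2^{-n}$, so the bound does not decay -- this is precisely the failure of the union bound for $|\Aut G|=2^{\aleph_0}$ that makes the motion conjectures hard, and your invocation of Lemma~\ref{lem:infcomp} does not repair it. (A further warning sign: nothing in your outline is specific to edge colourings, yet the paper cites a counterexample to the analogous vertex statement for countable graphs.)

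The paper's proof is, accordingly, a fully explicit construction rather than a probabilistic one: it first gives each infinite-degree vertex a distinct \emph{finite} number of black edges (an event of probability zero in your model -- a random colouring gives every such vertex infinitely many black edges), then uses Lemma~\ref{lem:infcomp} to reduce to infinite locally finite components, which it breaks by planting vertex-disjoint black paths of pairwise distinct lengths via a transfinite-flavoured induction over the sets $S_i$. The $2^{\aleph_0}$ non-isomorphic colourings come from the $2^{\aleph_0}$ choices of the degree sequence $(d_n)$, the length sequence $(l_n)$, or the colourings of infinitely many finite components -- not from a full-measure set. Your closing paragraph upgrading ``full measure'' to ``$2^{\aleph_0}$ classes'' also has an independent set-theoretic wrinkle (a conull set can consistently be covered by fewer than $2^{\aleph_0}$ null orbits unless one invokes a dichotomy theorem), but that is moot given that the measure-zero claim itself fails.
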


\begin{proof}
We first show that there is a distinguishing edge colouring by giving an explicit construction and then argue that within this construction we can make sufficiently many choices to obtain $2^{\aleph_0}$ non-isomorphic colourings.  

For the construction of the colouring we start with a colouring where all edges are coloured white and describe an inductive procedure to decide on edges whose colour will be changed to black. Since we change the colour of every edge at most once, we get a limit colouring which we will show to be distinguishing.

First we consider only edges incident to vertices of infinite degree. For this purpose choose an enumeration $(v_n^\infty)_{n \in \mathbb N}$ of these vertices and a strictly increasing sequence $(d_n)_{n \in \mathbb  N}$ of natural numbers. Note that $d_n \geq n$ because the sequence is strictly increasing.

Now inductively recolour edges incident to $v_n^{\infty}$ such that this vertex is incident to exactly $d_n$ black edges. We can do so without recolouring any edges incident to any vertex appearing earlier in the enumeration because there are at most $n-1$ such edges incident to $v_n^\infty$. In particular, since $d_n \geq n$ there can't be more than $d_n$ black edges incident to $v_n^\infty$ before step $n$.

With the colouring described above, no matter how we colour the remaining edges, every colour preserving automorphism must fix every vertex of infinite degree. This is because vertices of infinite degree must be mapped to vertices of infinite degree, and all of them have different degrees in the graph spanned by the black edges.

Now denote by $G_{\not\infty}$ the graph obtained from $G$ by deleting all vertices of infinite degree. Since all vertices of infinite degree must be fixed by every colour preserving automorphism it follows from Lemma~\ref{lem:infcomp} that no such automorphism of $G$ moves any vertex contained in a finite component of $G_{\not \infty}$.

Hence we only need to take care of automorphisms moving vertices in infinite components of $G_{\not \infty}$. For this purpose let $(C_k)_{k \in \mathbb N}$ be an enumeration of these components and let $l_n$ be a strictly increasing sequence of natural numbers with $l_1 > 1$. We will now recolour the edges of each $C_k$ in such a way that 
\begin{enumerate}[label=(\alph*)] 
\item \label{itm:disjointpaths} the subgraph of $C_k$ spanned by the black edges is a vertex disjoint union of paths of lengths $1, l_k, l_{k+1}, l_{k+2}, \ldots$, and
\item \label{itm:breaksetstab} there is no colour preserving automorphism of $G$ stabilising $C_k$ setwise but not pointwise. 
\end{enumerate}
Note that property~\ref{itm:disjointpaths} ensures that there is no automorphism which moves one infinite component to another because if $k_1 < k_2$ then $C_{k_1}$ contains a black path of length $l_{k_1}$ while $C_{k_2}$ doesn't. Property~\ref{itm:breaksetstab} makes sure that there is no automorphism mapping $C_k$ non-trivially to itself. Combined those two properties make sure that every colour preserving automorphism must fix every vertex in each $C_k$. Since we already established that each such automorphism must also fix all other vertices this means that we have found a distinguishing edge colouring.

We shall now construct a colouring of the edges of $C_k$ with properties~\ref{itm:disjointpaths} and \ref{itm:breaksetstab}. First we pick an edge $e$ of $C_k$  and colour it black. Define
\[
	S_i = \{v \in V(C_k) \mid d(v,e) = i\}
\]
where $d(v,e)$ denotes the minimal distance (measured in $C_k$) of $v$ to one of the two endvertices of $e$. Observe that $S_i$ must be finite for every $i$ since $C_k$ is locally finite.

Throughout the construction the edge $e$ will remain the only black edge which is not incident to any other black edge. Hence every colour preserving automorphism which maps $C_k$ onto itself must fix the edge $e$ and thus lie in the setwise stabiliser of every $S_i$. In particular, if such an automorphism acts non-trivially on $C_k$ then there is some $S_i$ on which it acts non-trivially.

Furthermore, throughout the construction it will be true that every colour preserving automorphism fixes each vertex in $V(C_k) \setminus S_0$ which is incident to a black edge. This fact will be useful to make sure that the paths we colour black are indeed disjoint.

In what follows we will only consider colour preserving automorphisms which stabilise $C_k$ setwise but not pointwise. We will denote the set of such automorphisms by $\Gamma$. Note that $\Gamma$ changes in each recolouring step. Furthermore, every $\gamma \in \Gamma$ must fix $V_\infty$ pointwise because it is assume to preserve the colouring constructed thus far. In particular we can without loss of generality assume that such an automorphism acts trivially outside of $C_k$.

Let $i \in \mathbb N$ be minimal with the property that there is $\gamma \in \Gamma$ which acts non-trivially on $S_i$. Choose $v \in S_i$ and $\gamma \in \Gamma$ such that $\gamma(v) \neq v$. Since all vertices moved by $\gamma$ have finite degree, Lemma~\ref{lem:infcomp} tells us that there must be a ray starting in $v$ which consists only of vertices which are moved by $\gamma$. Furthermore we can without loss of generality assume that the ray starts in $S_i$ and otherwise only contains vertices in $S_j$ for $j > i$. This can be achieved by moving to a suitable subray since $\bigcup_{j \leq i} S_j$ only contains finitely many vertices. Since every vertex of the ray is moved by $\gamma$ and $\gamma$ is colour preserving we infer that the ray contains no vertex which is incident to a black edge.

Now let $l_n$ be the smallest length in the sequence $l_k, l_{k+1},\ldots$ which has not been used yet. We colour an initial piece $P$ of length $l_n$ of our ray black and leave the rest of the colouring as it is.

Clearly $P$ is vertex disjoint from all other black paths constructed so far. Furthermore, since there is no other black path of length $l_n$ in $C_k$, each automorphism in $\Gamma$ must fix $P$ setwise (note that by recolouring $P$ we changed $\Gamma$). Finally every such automorphism must fix $P$ pointwise because only one endpoint of $P$ lies in the set $S_i$.

Since $|S_i|$ is an upper bound on the number of vertex disjoint paths starting at $S_i$ we end up with no $\gamma \in \Gamma$ acting non-trivially on $S_i$ after finitely many steps. Iterate the procedure with the next (larger) $i$. 

If after finitely many steps of this iteration we end up with $\Gamma = \{id\}$ then we put disjoint black paths of the remaining lengths anywhere in the (infinite) white part of $C_k$, otherwise continue inductively forever. This ensures that the colouring satisfies~\ref{itm:disjointpaths} which will be convenient when showing that there are continuum many non-isomorphic distinguishing colourings.

In the limit we get a colouring with infinitely many black paths of different lengths. An element of $\Gamma$ which preserves this limit colouring hence must preserve all of those paths setwise, in particular it must be a colour preserving automorphism of the colourings obtained in every single step of the construction. However, such an automorphism must act non-trivially on some $S_i$ which implies that there is some step for which it does not preserve the colouring. Hence the limit colouring satisfies properties~\ref{itm:disjointpaths} and \ref{itm:breaksetstab}. If we carry out this construction for every infinite component we thus get a distinguishing $2$-edge colouring.

It remains to show that we still have enough freedom in the construction to obtain $2^{\aleph_0}$ non-isomorphic such colourings. 

Firstly, if there are infinitely many vertices of infinite degree then each of the $2^{\aleph_0}$ choices for the sequence $(d_n)_{n \in \mathbb N}$ will deliver a colouring which is not isomorphic to any of the other colourings. The reason for this is, that vertices of infinite degree must be mapped to vertices of infinite degree while preserving the number of black edges incident to them. 

Secondly, if there is an infinite component of $G_{\not \infty}$ then each of the $2^{\aleph_0}$ choices for the sequence $(l_n)_{n \in \mathbb N}$ will give a colouring which is not isomorphic to any of the other colourings. Here the reason is, that black paths of length $l$ must be mapped to black paths of length $l$.

The only remaining case is that there are only finitely many vertices of infinite degree and all components of $G_{\not \infty}$ are finite. In this case we can colour the edges in the finite components any way we want (each choice giving a colouring which is not isomorphic to any of the others). Hence, if there are infinitely many such edges we are done. The only way this could fail is that all but finitely many of the components are singletons, so in particular $G_{\not \infty}$ must have infinitely many isolated vertices. But then there must be two such vertices which have the same neighbours in $G$ because there are only finitely many vertices of infinite degree. The transposition of two such vertices would be an automorphism of $G$ with finite edge motion, a contradiction to the assumption that $G$ has infinite edge motion.
\end{proof}

As a corollary to the above theorem we obtain another partial result towards Conjecture~\ref{con:imc}: we show that it is true for line graphs, the proof works even without the requirement of local finiteness.

\begin{cor}
Conjecture~\ref{con:imc} is true for line graphs.
\end{cor}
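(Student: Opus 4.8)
The plan is to deduce the corollary from the theorem just proved by passing through the generalisation of Whitney's theorem mentioned in the introduction. Let $H$ be a connected line graph every automorphism of which moves infinitely many vertices; I must show $D(H)\leq 2$. Write $H=L(G)$ and, discarding isolated vertices of $G$ (which do not affect the line graph), assume $G$ has none; then connectedness of $H$ forces $G$ to be connected. The infinite motion hypothesis forces $H$ to be infinite, so $G$ has infinitely many edges and is in particular a connected graph on more than four vertices. This makes the natural isomorphism $\Phi\colon \Aut G \to \Aut H$ of~\cite{zbMATH03228459} available. Since the vertices of $H$ are exactly the edges of $G$, countability of $H$ gives countably many edges of $G$, and hence $G$ is countable; note that it is this countability, rather than the local finiteness demanded in Conjecture~\ref{con:imc}, that the argument actually uses.

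Next I would transport the motion condition across $\Phi$. By construction $\Phi(\gamma)$ acts on the vertices of $H$ precisely as $\gamma$ acts on the edges of $G$, so $\Phi(\gamma)$ fixes a vertex of $H$ if and only if $\gamma$ fixes the corresponding edge of $G$. As $\Phi$ is a bijection, the hypothesis that every automorphism of $H$ moves infinitely many vertices is equivalent to the statement that every automorphism of $G$ moves infinitely many edges, i.e.\ that $G$ has infinite edge motion. The theorem above then provides a distinguishing $2$-edge colouring $c$ of $G$.

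Finally I would translate $c$ back: colour each vertex $e$ of $H$ by the colour $c(e)$ of the corresponding edge of $G$. Any automorphism $\psi$ of $H$ preserving this colouring is of the form $\Phi(\gamma)$ for a unique $\gamma\in\Aut G$, and this $\gamma$ preserves $c$; since $c$ is distinguishing, $\gamma=\mathrm{id}$ and hence $\psi=\mathrm{id}$. Thus we obtain a distinguishing $2$-vertex colouring of $H$, proving $D(H)\leq 2$. I expect no serious obstacle here, since the argument is essentially a dictionary translation; the only points needing care are verifying the applicability conditions of the Whitney isomorphism---connectedness and size, both supplied by the infinite motion hypothesis---and observing that the reduction consumes countability in place of local finiteness.
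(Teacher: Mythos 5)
Your proposal is correct and follows essentially the same route as the paper: reduce to a connected $G$, invoke the Whitney-type isomorphism $\Aut G \cong \Aut L(G)$ to transfer infinite vertex motion of $L(G)$ into infinite edge motion of $G$, apply the main theorem, and translate the resulting distinguishing $2$-edge colouring back to a vertex colouring of $L(G)$. Your write-up is merely more explicit about the applicability conditions (countability, the more-than-four-vertices hypothesis) than the paper's terse version.
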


\begin{proof}
Let $L(G)$ be a countable, connected line graph of some graph $G$ with infinite motion. Then $G$ has only one component which contains edges, without loss of generality assume that $G$ is connected. By~\cite{zbMATH03228459} the automorphism groups of $G$ and $L(G)$ are isomorphic by means of the obvious map from $\Aut G$ to $\Aut L(G)$. This implies that $G$ has infinite edge motion and every distinguishing edge colouring of $G$ translates into a distinguishing vertex colouring of $L(G)$.
\end{proof}

\section{Edge colourings vs.\ vertex colourings}
\label{sec:edgevertex}

The purpose of this section is to compare distinguishing vertex and edge colourings. We will show how to construct from a distinguishing vertex colouring a distinguishing edge colouring using at most one more colour. The following construction will be our starting point.

Let $G$ be a graph and let $c\colon V \to C$ be a colouring of the vertex set of $G$ with colours in $C$. Without loss of generality assume that $C$ carries the additional structure of an Abelian group. Then we can obtain a colouring of the edge set by $e \mapsto c(u) + c(v)$ for $e = uv$. We will call such an edge colouring a \emph{canonical} edge colouring. We now derive some useful properties of canonical edge colourings.

%

\begin{lem}
\label{lem:nofixedpoint}
Let $G$ be a connected graph and let $c'$ be a canonical edge colouring of $G$ which comes from a distinguishing vertex colouring $c$. If there is a non-trivial automorphism preserving $c'$ then it doesn't preserve $c(v)$ for any vertex $v$. In particular such an automorphism can't have a fixed point in the vertex set.
\end{lem}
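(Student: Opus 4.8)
The plan is to translate the hypothesis that a non-trivial automorphism $\gamma$ preserves the canonical edge colouring $c'$ into a condition on the underlying vertex colouring $c$, and then to exploit connectedness. Recall that $c'(uv) = c(u) + c(v)$, so the assumption $c' \circ \gamma = c'$ says precisely that for every edge $uv$ we have
\[
c(\gamma(u)) + c(\gamma(v)) = c(u) + c(v).
\]
I would then introduce the auxiliary function $f(v) := c(\gamma(v)) - c(v)$, which is well defined because $C$ is an Abelian group. Rearranging the displayed identity, it becomes $f(u) + f(v) = 0$, that is $f(u) = -f(v)$, for every edge $uv$. The whole statement is now a statement about where $f$ vanishes: we must show $f(v) \neq 0$ for every vertex $v$.

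The key observation is that the value $f(v) = 0$ propagates along edges: if $f(v) = 0$ and $u$ is adjacent to $v$, then $f(u) = -f(v) = 0$ as well. Because $G$ is connected, this forces a dichotomy. Either $f$ vanishes at every vertex, or it vanishes at none; indeed, since $-a = 0$ if and only if $a = 0$ in a group, the very same propagation shows that the property $f(v) \neq 0$ also spreads to all neighbours, so neither $\{v : f(v)=0\}$ nor its complement can be a proper nonempty subset of a connected vertex set.

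It then remains only to rule out the first alternative. If $f$ vanished everywhere we would have $c(\gamma(v)) = c(v)$ for all $v$, i.e.\ $\gamma$ would preserve the vertex colouring $c$; but $c$ is distinguishing, so this would force $\gamma = \mathrm{id}$, contradicting that $\gamma$ is non-trivial. Hence the second alternative holds, giving $c(\gamma(v)) \neq c(v)$ for every $v$, which is exactly the claim. The final ``in particular'' assertion is then immediate, since a fixed vertex $v = \gamma(v)$ would satisfy $c(\gamma(v)) = c(v)$, which we have just excluded. I do not expect a genuine obstacle here: the argument is short, and the only points requiring care are that $C$ must be a group (so that $f$ is defined and the $0$/non-$0$ dichotomy behaves symmetrically) and that connectedness is used in full to propagate the (non-)vanishing of $f$ across the entire vertex set.
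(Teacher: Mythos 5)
Your proof is correct and follows essentially the same route as the paper: the identity $c(\gamma u)+c(\gamma v)=c(u)+c(v)$ on edges is used to propagate the property $c(\gamma v)=c(v)$ to neighbours, connectedness spreads it to all of $G$, and distinguishability of $c$ then forces $\gamma=\mathrm{id}$. Your auxiliary function $f(v)=c(\gamma v)-c(v)$ is just a tidy repackaging of the paper's induction on distance from a hypothetical vertex with $c(v_0)=c(\gamma v_0)$.
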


\begin{proof}
Let $\gamma$ be a non-trivial automorphism preserving $c'$. Since $\gamma$ preserves $c'$ we know that for every edge $e = uv$ it holds that 
\[
	c(u) + c(v) = c'(e) = c'(\gamma e) = c(\gamma u) + c(\gamma v).
\]

Now assume that there was a vertex $v_0$ such that $c(v_0) = c(\gamma v_0)$. Then for every neighbour $v$ of $v_0$ it we have
\[
	c(v_0) + c(v) =  c(\gamma v_0) + c(\gamma v) = c(v_0) + c(\gamma v),
\]
and thus $c(v) = c(\gamma v)$. 

By induction on the distance between $v$ and $v_0$ we obtain that $c(v) = c(\gamma v)$ for every vertex $v$ of $G$. Hence $\gamma$ preserves the colouring $c$ and thus it must be the identity.
\end{proof}

\begin{cor}
Let $G$ be a connected graph and let $c'$ be a canonical edge colouring corresponding to a distinguishing vertex colouring $c$ with colours in $C$. Let $\Gamma$ be the stabiliser of $c'$ in $\Aut G$.  Then $|\Gamma| \leq |C|$.
\end{cor}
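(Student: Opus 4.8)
The plan is to build an injective map from $\Gamma$ into the colour group $C$, which immediately yields $|\Gamma| \le |C|$. For each $\gamma \in \Gamma$ define the \emph{shift} $f_\gamma \colon V \to C$ by $f_\gamma(v) = c(\gamma v) - c(v)$, where the subtraction is taken in the Abelian group $C$. The map I would study is $\Phi \colon \Gamma \to C$ given by $\Phi(\gamma) = f_\gamma(v_0)$ for a fixed base vertex $v_0$, and the goal is to show that $\Phi$ is injective.

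First I would record the key structural property of the shift. Exactly as in the computation in the proof of Lemma~\ref{lem:nofixedpoint}, the hypothesis that $\gamma$ preserves $c'$ gives $c(u) + c(v) = c(\gamma u) + c(\gamma v)$ for every edge $uv$, which rearranges to $f_\gamma(u) = -f_\gamma(v)$ for adjacent $u$ and $v$. Thus the shift alternates sign along every edge, so for any vertex $w$ joined to $v_0$ by a path of length $d$ one has $f_\gamma(w) = (-1)^d f_\gamma(v_0)$. Since $G$ is connected, this shows that $f_\gamma$ is determined everywhere by its single value $f_\gamma(v_0) = \Phi(\gamma)$.

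It remains to verify injectivity, which is where the distinguishing hypothesis enters. Suppose $\gamma_1, \gamma_2 \in \Gamma$ satisfy $\Phi(\gamma_1) = \Phi(\gamma_2)$. By the previous paragraph $f_{\gamma_1} = f_{\gamma_2}$ as functions on $V$, that is, $c(\gamma_1 v) = c(\gamma_2 v)$ for every $v$. Substituting $\gamma_1^{-1} v$ for $v$ gives $c(v) = c(\gamma_2 \gamma_1^{-1} v)$ for all $v$, so the automorphism $\gamma_2 \gamma_1^{-1}$ preserves the vertex colouring $c$. As $c$ is distinguishing, $\gamma_2 \gamma_1^{-1}$ is the identity, whence $\gamma_1 = \gamma_2$. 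Therefore $\Phi$ is an injection of $\Gamma$ into $C$, and $|\Gamma| \le |C|$.

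I expect the only delicate point to be the passage from ``$f_{\gamma_1}$ and $f_{\gamma_2}$ agree at $v_0$'' to ``they agree everywhere,'' which relies on connectedness together with the sign-alternation of the shift established in the second step; everything else is a short computation in the group $C$ followed by a direct appeal to the distinguishing property of $c$.
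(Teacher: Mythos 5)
Your proof is correct, and it is essentially the paper's argument in different packaging: your injection $\Phi(\gamma)=c(\gamma v_0)-c(v_0)$ is, up to the constant $-c(v_0)$, the paper's implicit map $\gamma\mapsto c(\gamma v_0)$, whose injectivity the paper obtains by combining Lemma~\ref{lem:nofixedpoint} with the observation that a free action bounds $|\Gamma|$ by the orbit size, which in turn is at most $|C|$. The only real difference is that you re-derive the propagation step (the sign-alternation of $f_\gamma$ along edges, and hence $f_\gamma$ being determined by its value at $v_0$) inline rather than quoting Lemma~\ref{lem:nofixedpoint}, which already encapsulates exactly that computation.
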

\begin{proof}
By Lemma~\ref{lem:nofixedpoint} there is no vertex which is fixed by any $\gamma \in \Gamma$. Hence the size of $\Gamma$ is bounded from above by the size of the orbits on the vertex set. If the size of any orbit were $> |C|$, then there would be two different vertices with the same colour in this orbit. Hence we would have a non-trivial automorphism mapping some vertex to a vertex with the same colour, a contradiction.
\end{proof}

The above results show that a canonical edge colouring corresponding to a distinguishing vertex colouring is not far from being distinguishing. We now show how to modify it in order to obtain a distinguishing edge colouring using only one additional colour.
We note that a finite version of the following theorem has been proved (using an entirely different approach) in \cite{zbMATH06381902}. A proof for infinite graphs following essentially the same lines as the finite proof has been announced by Imrich et al.~\cite{distindex-compare}. It is also worth mentioning that the bound is known to be tight as there is a family of finite trees for which equality holds, see \cite{zbMATH06381902}.

\begin{thm}
\label{thm:differenceone}
Let $G$ be a connected graph, then $D'(G) \leq D(G)+1$.
\end{thm}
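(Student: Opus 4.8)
The plan is to start from the canonical edge colouring and repair it with a single extra colour. Fix a distinguishing vertex colouring $c$ realising $D(G)$ colours and regard the colour set $C$, with $|C| = D(G)$, as an Abelian group, so that the associated canonical edge colouring $c'$ uses at most $D(G)$ colours. Let $\Gamma \le \Aut G$ be the stabiliser of $c'$. By Lemma~\ref{lem:nofixedpoint} every non-trivial element of $\Gamma$ is fixed-point-free on the vertex set, so all point stabilisers in $\Gamma$ are trivial and $\Gamma$ acts semiregularly; by the corollary following that lemma $\Gamma$ is moreover finite, with every orbit of size $|\Gamma| \le |C|$ and consisting of vertices of pairwise distinct $c$-colours. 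The entire remaining task is therefore to repaint some edges with one new colour $\ast$ so as to kill every non-trivial element of this finite group.

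First I would isolate the right notion of a \emph{safe} recolouring. Let $F$ be the set of edges repainted $\ast$ and let $c''$ be the resulting colouring. If $F$ is chosen inside a single $c'$-colour class, a short computation shows that an automorphism preserves $c''$ if and only if it lies in $\Gamma$ \emph{and} fixes $F$ setwise: keeping $F$ monochromatic stops $\ast$ from merging two distinct old colour classes, which is exactly what would let a symmetry outside $\Gamma$ slip in. Thus it suffices to find, inside some colour class $A = (c')^{-1}(a_0)$, a subset $F$ with trivial setwise $\Gamma$-stabiliser. Because $\Gamma$ acts semiregularly, I would in fact aim for the cleaner target of pinning down a single vertex, since any $\gamma \in \Gamma$ fixing one vertex is already the identity.

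For the existence of such an $F$ I would exploit that $\Gamma$ is finite, so only finitely many non-trivial $\gamma$ must be destroyed. A non-trivial $\gamma \in \Gamma$ can fix an edge $uv \in A$ only by swapping $u$ and $v$, which forces $\gamma$ to be an involution whose fixed edges form a matching; in particular $\gamma$ can fix every member of $A$ only if it swaps the endpoints of \emph{all} edges of that class. Barring this, every non-trivial $\gamma$ moves some edge of $A$, and one can build $F$ greedily: for each of the finitely many $\gamma$ pick an edge $e_\gamma \in A$ with $\gamma e_\gamma \ne e_\gamma$ and arrange $e_\gamma \in F$ but $\gamma e_\gamma \notin F$. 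This is immediate when a colour class is infinite (which must happen whenever $G$ is infinite, as there are only $\le D(G)$ colours), and it reduces the case of finitely many, finite classes to the finite graph setting, where the bound is already known~\cite{zbMATH06381902}. In all these cases $c''$ is distinguishing and uses at most $D(G)+1$ colours.

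I expect the main obstacle to be precisely the excluded configurations, in which on every colour class some non-trivial element of $\Gamma$ swaps the endpoints of \emph{all} its edges. If a single involution does this for the whole edge set then $\gamma(u)$ is forced to be the unique neighbour of each $u$, so $G$ degenerates to $K_2$, a genuine boundary case that must be treated by convention or separately. Ruling out the remaining small exceptions is where the real care lies: one recolours a second, carefully chosen edge to break a residual endpoint-swapping involution, and then verifies that $G$ minus the repainted edges stays connected enough to propagate $c(\gamma v) = c(v)$ outward from the pinned vertex via the canonical identity $c''(uv) = c(u)+c(v)$ on the untouched edges. The bound $|\Gamma| \le |C|$ from the corollary is what confines these bad cases to a short, explicitly checkable list rather than allowing the large symmetric groups that would otherwise defeat a one-colour repair.
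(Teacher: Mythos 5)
Your setup (canonical colouring, Lemma~\ref{lem:nofixedpoint}, the bound $|\Gamma|\leq|C|$) matches the paper, but from there the argument is not a proof: the part you yourself flag as ``where the real care lies'' is exactly the part that is missing, and the claim that the bad cases form ``a short, explicitly checkable list'' is unjustified --- the corollary bounds the \emph{group} $\Gamma$, not the set of graphs or configurations in which every colour class is an endpoint-swapped matching, so there is no finite list to check. Worse, the intermediate target is itself unattainable in general: you reduce to finding $F$ inside one colour class $A$ with trivial setwise $\Gamma$-stabiliser, but a finite group acting on a set need not admit any subset with trivial setwise stabiliser (e.g.\ the induced action on $A$ could contain $S_3$ acting naturally on three edges while the remaining edges of $A$ are flipped in place by the transpositions; then every $F\subseteq A$ is stabilised by some non-trivial element). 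Ruling such actions out requires exactly the kind of structural analysis of $\Gamma$ you defer. Two further holes: the dichotomy ``some colour class is infinite, or $G$ is finite'' is not exhaustive when $D(G)$ is infinite (then $\Gamma$ need not be finite either, so ``only finitely many $\gamma$ must be destroyed'' fails); and outsourcing the finite case to \cite{zbMATH06381902} means you never actually prove the theorem for finite graphs, whereas the paper gives a uniform self-contained argument.

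The paper avoids all of this by never aiming at a set with trivial setwise stabiliser. You note in passing that pinning a single vertex suffices, and that is the whole proof: the new colour is spent on forcing a fixed \emph{vertex}. If $c'$ has two incident edges of equal colour, recolour both with the new colour; any automorphism preserving the result permutes these two edges, hence fixes their common endpoint and still preserves $c'$, so Lemma~\ref{lem:nofixedpoint} finishes. If $c'$ is a proper edge colouring, give one edge $e$ the new colour and recolour an incident edge $f$ with the old colour $c'(e)$; then $e$ and $f$ are individually forced, their common endpoint is fixed, and the same lemma applies. No analysis of $\Gamma$ beyond the fixed-point-freeness is needed, and the argument is insensitive to whether $G$ or $D(G)$ is finite.
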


\begin{proof}
Let $c$ be a distinguishing $k$-colouring and let $c'$ be the corresponding canonical edge colouring. 

If there are two incident edges with the same colour, then take two such edges $e$ and $f$ and colour both of them with a new colour $x$. An automorphism which preserves the resulting colouring either fixes both edges or swaps them. In both cases it is easy to verify that such an automorphism preserves $c'$. But since $e$ and $f$ are the only edges with colour $x$, such an automorphism must fix the vertex at which $e$ and $f$ meet. Thus it has a fixed point and by Lemma~\ref{lem:nofixedpoint} it must be the identity.

So assume that no two incident edges receive the same colour in $c'$. Take an arbitrary edge $e$ and colour it with colour $k$. Then recolour an edge $f$ incident to $e$ with colour $c'(e)$. An automorphism which preserves the resulting colouring must fix the edge $e$ because it is the only edge with colour $x$. It must also fix the edge $f$ because it is the only edge incident to $e$ which has colour $c'(e)$. Since all the other edges have the same colours as in $c'$, the automorphism in question must preserve $c'$. But it also has to fix the vertex where $e$ and $f$ meet and thus by Lemma~\ref{lem:nofixedpoint} it is the identity.
\end{proof}

\begin{cor}
\label{cor:infinite_equal}
If $D(G)$ is infinite then $D'(G) \leq D(G)$.
\end{cor}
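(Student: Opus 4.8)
The plan is to deduce the corollary immediately from Theorem~\ref{thm:differenceone} together with elementary cardinal arithmetic. First I would observe that the hypothesis that $D(G)$ is infinite means that the least cardinality of a distinguishing vertex colouring of $G$ is some infinite cardinal $\kappa$. Applying Theorem~\ref{thm:differenceone} to such a colouring produces a distinguishing edge colouring using only one additional colour, so that $D'(G) \leq D(G) + 1$.

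The crux is simply that $\kappa + 1 = \kappa$ for every infinite cardinal $\kappa$: adjoining a single new colour to a palette that already contains infinitely many colours does not change the cardinality of the colour set. Hence $D(G) + 1 = D(G)$, and substituting this into the bound above yields $D'(G) \leq D(G)$, which is the claim.

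I do not anticipate any genuine obstacle, since all the substantive work is already carried out in the proof of Theorem~\ref{thm:differenceone}; the corollary merely records that the gap of one colour becomes invisible in the infinite setting. The only point meriting a moment's care is to confirm that the construction in Theorem~\ref{thm:differenceone} really introduces just a single new colour symbol, so that the passage from $D(G)$ to $D(G)+1$ corresponds to ordinary cardinal addition of one — which it plainly does, as the construction either reuses an existing colour on a new pair of edges or adds the one fresh colour $x$.
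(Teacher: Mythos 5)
Your proof is correct and is essentially identical to the paper's: both deduce the corollary from Theorem~\ref{thm:differenceone} via the cardinal identity $1+\alpha=\alpha$ for infinite $\alpha$. No further comment is needed.
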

\begin{proof}
For an infinite cardinal $\alpha$ we have $1+ \alpha = \alpha$.
\end{proof}

\bibliographystyle{abbrv}
\bibliography{ref}

\end{document}